\theoremstyle{plain}
\newtheorem{thm}{Theorem}[section]
\newtheorem*{thm*}{Theorem}
\newtheorem{prop}{Proposition}[section]
\newtheorem*{prop*}{Proposition}
\newtheorem{cor}{Corollary}[section]
\newtheorem*{cor*}{Corollary}
\newtheorem*{lem*}{Lemma}
\theoremstyle{definition}
\newtheorem{defn}{Definition}[section]
\newtheorem*{defn*}{Definition}
\newtheorem*{exmps*}{Examples}
\newtheorem*{exmp*}{Example}
\newtheorem*{exerc*}{Exercise}
\newtheorem{rems}{Remarks}[section]
\newtheorem*{rems*}{Remarks}
\newtheorem{rem}[rems]{Remark}
\newtheorem*{rem*}{Remark}
\newcommand{\N}{{\mathbb N}}%%%%%%%%%NATURAL NUMBERS
\newcommand{\Z}{{\mathbb Z}}%%%%%%%%%INTEGERS
\newcommand{\R}{{\mathbb R}}%%%%%%%%%REAL NUMBERS
\newcommand{\C}{{\mathbb C}}%%%%%%%%%COMPLEX NUMBERS
\newcommand{\F}{{\mathbb F}}
\newcommand{\eps}{\varepsilon}
\numberwithin{equation}{section}
\begin{document}
%%%%%%%%%%%%%%%%%%%%%%TITLE%%%%%%%%%%%%%%%%%%%%%%%%%%%%%%%%%
\title[On convergence in Banach spaces with a Schauder basis]
{On a characterization of convergence\\ in Banach spaces with a Schauder basis}
%%%%%%%%%%%%%%%%%%%%%%AUTHORS%%%%%%%%%%%%%%%%%%%%%%%%%%%%%%% first author
\author[Marat V. Markin]{Marat V. Markin}
%%%%%%%%%%%%%%%%%%%%%ADDRESS%%%%%%%%%%%%%%%%%%%%%%%%%%%%%%%%
\address{
Department of Mathematics\newline
%College of Science and Mathematics\newline
California State University, Fresno\newline
5245 N. Backer Avenue, M/S PB 108\newline
Fresno, CA 93740-8001
}
%%%%%%%%%%%%%%%%%%%%%E-MAIL%%%%%%%%%%%%%%%%%%%%%
\email[corresponding author]{mmarkin@csufresno.edu}
%%%%%%%%%%%%%%%%%%%%%%%%%%%%%%%%%%%%%%%%%%%%%%
% second author
\author{Olivia B. Soghomonian}
%\address{
%Department of Mathematics\newline
%College of Science and Mathematics\newline
%California State University, Fresno\newline
%5245 N. Backer Avenue, M/S PB 108\newline
%Fresno, CA 93740-8001
%}
\email{osogho5780@mail.fresnostate.edu}
%%%%%%%%%%%%%%%%%%%%%DEDICATORY%%%%%%%%%%%%%%%%%%%%%%%%%%%%%
%\dedicatory{}
%%%%%%%%%%%%%%%%%%%%DATE%%%%%%%%%%%%%%%%%%%%%%%%%%%%%%%%%%%%
%\date{}
%%%%%%%%%%%%%%%%%%%%%ACKNOWLEDGEMENTS%%%%%%%%%%%%%%%%%%%%%%%
%\thanks{}
%%%%%%%%%%%%%%%%%%%SUBJECT CLASSIFICATION%%%%%%%%%%%%%%%%%%
\subjclass{Primary 46B15, 46B45; Secondary 46A35, 46A45, 46B50}
%%%%%%%%%%%%%%%%%%%%KEYWORDS%%%%%%%%%%%%%%%%%%%%%%%%%%%%%%%%
\keywords{Convergence, Banach space with a Schauder basis}
%%%%%%%%%%%%%%%%%%%%ABSTRACT%%%%%%%%%%%%%%%%%%%%%%%%%%%%%%%%
\begin{abstract}
We extend the well-known characterizations of convergence in the spaces $l_p$ ($1\le p<\infty$) of $p$-summable sequence and $c_0$ of vanishing sequences to a general characterization of convergence in a Banach space with a Schauder basis and obtain as instant corollaries characterizations of convergence in an infinite-dimensional separable Hilbert space and the space $c$ of convergent sequences.
\end{abstract}
%%%%%%%%%%%%%%%%%%%%%%%%%%%%%%%%%%%%%%%%%%%%%%%%%%%%%%%%%%%%%
\maketitle
%%%%%%%%%%%%%%%%%%%%%%%%%EPIGRATH%%%%%%%%%%%%%%
\epigraph{\textit{The method in the present paper is abstract and is phrased in terms of Banach spaces, linear operators, and so on. This has the advantage of greater simplicity in proof and greater generality in applications.}}{Jacob T. Schwartz}

%%%%%%%%%%%%%%%%%%%%%%%%%%%%%%%%%%%%%%%%%%%%%%
\section[Introduction]{Introduction}

In normed vector spaces of sequences, termwise convergence, being a necessary condition for convergence of a sequence (of sequences), falls short of being characteristic (see, e.g., \cite{Markin2020EOT}). Thus, the natural question: what conditions are required to be, along with termwise convergence, necessary and sufficient for convergence of a sequence in such spaces?

It turns out that, in the Banach spaces $l_p$ ($1\le p<\infty$) of $p$-\textit{summable sequences} with $p$-norm
\[
x:=\left(x_k\right)_{k\in \N}\mapsto \|x\|_p:={\left[\sum_{k=1}^\infty \left|x_k\right|^p\right]}^{1/p}
\]
 ($\N:=\left\{ 1,2,\dots\right\}$ is the set of \textit{natural numbers})
and $c_0$ of \textit{vanishing sequences} with $\infty$-norm
\begin{equation}\label{infn}
x:=\left(x_k\right)_{k\in \N}\mapsto \|x\|_\infty:=\sup_{k\in \N}|x_k|,
\end{equation} 
only one additional condition is needed. The following characterizations of convergence in the foregoing spaces are well-known.

\begin{prop}[Characterization of Convergence in $l_p$ ($1\le p<\infty$)]\label{CCl_p}\ \\
In the (real or complex) space $l_p$ ($1\le p<\infty$), 
\[
\left(x_k^{(n)}\right)_{k\in \N}=:x^{(n)}\to x:=\left(x_k\right)_{k\in \N},\ n\to \infty,
\]
iff
\begin{enumerate}[label={(\arabic*)}]
\item $\forall\, k\in\N:\ x_k^{(n)}\to x_k$, $n\to \infty$, and
%%%%%%%%%%%%%
\item $\displaystyle \forall\, \varepsilon>0\ \exists\, K\in \N\ \forall\,n\in \N:\ \sum_{k=K+1}^\infty \left|x_k^{(n)}\right|^p<\varepsilon$.
\end{enumerate}
\end{prop}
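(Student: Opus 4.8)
The plan is to establish both implications, with the forward direction (necessity) being relatively direct and the reverse (sufficiency) requiring the familiar three-way splitting of the $p$-norm into a finite head and two tails. For necessity, suppose $x^{(n)}\to x$ in $l_p$. Condition (1) is then immediate: for each fixed $k\in\N$, the pointwise estimate $|x_k^{(n)}-x_k|\le \|x^{(n)}-x\|_p$ forces termwise convergence. For condition (2), I would fix $\varepsilon>0$, choose $N$ so that $\|x^{(n)}-x\|_p$ is small for all $n>N$, and use that $x\in l_p$ to pick $K_0$ making the tail $\sum_{k=K_0+1}^\infty|x_k|^p$ small; Minkowski's inequality then bounds the tail of each $x^{(n)}$ with $n>N$ by the tail of the difference plus the tail of $x$, both small. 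The finitely many initial sequences $x^{(1)},\dots,x^{(N)}$ each lie in $l_p$ and so have tails beyond respective indices $K_1,\dots,K_N$ smaller than $\varepsilon$, and taking $K:=\max\{K_0,K_1,\dots,K_N\}$ yields the required uniform-in-$n$ bound.

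For sufficiency, assume (1) and (2). The first task is to verify that the candidate limit $x$ actually belongs to $l_p$, which does \emph{not} follow from termwise convergence alone. Here I would fix $\varepsilon>0$, take the $K$ supplied by (2), and for each finite $M>K$ pass to the limit in the finite sum, using (1), to get $\sum_{k=K+1}^M|x_k|^p=\lim_{n\to\infty}\sum_{k=K+1}^M|x_k^{(n)}|^p\le\varepsilon$; letting $M\to\infty$ gives $\sum_{k=K+1}^\infty|x_k|^p\le\varepsilon$, whence $x\in l_p$. For the convergence itself, I would split, for a suitably large fixed $K$,
\[
\|x^{(n)}-x\|_p^p=\sum_{k=1}^{K}\left|x_k^{(n)}-x_k\right|^p+\sum_{k=K+1}^\infty\left|x_k^{(n)}-x_k\right|^p,
\]
bounding the tail uniformly in $n$ via Minkowski's inequality by the tails of $x^{(n)}$ and of $x$, both small by (2) and by $x\in l_p$, and then driving the finite head to $0$ by applying termwise convergence (1) to the finitely many summands for $n$ large.

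The main obstacle I anticipate is the interchange-of-limits issue in the sufficiency direction: one cannot pass to the limit directly inside the infinite tail, so the argument must first pass to the limit in the finite truncations and only afterwards let the truncation index tend to infinity (a Fatou-type step). The conceptual crux in both directions is securing a single index $K$ that controls the tails of \emph{all} the $x^{(n)}$ simultaneously; condition (2) is precisely the uniformity hypothesis that makes this possible, and its interplay with the triangle inequality in $l_p$ is what ultimately closes the argument.
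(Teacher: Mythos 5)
Your proof is correct, but it takes a different route from the paper's. The paper does not prove Proposition \ref{CCl_p} directly: it cites the literature for it and then recovers it (via Proposition \ref{CCC}) as an instant corollary of the abstract Theorem \ref{GCC}, whose proof runs through the Banach-space machinery of a Schauder basis --- the associated sequence space $Y$, the Inverse Mapping Theorem, and the resulting uniform bounds $\|S_n\|\le\|A^{-1}\|$, $\|R_n\|\le 2\|A^{-1}\|$ on the partial-sum and remainder projections, with the decomposition $I=S_{K_0}+R_{K_0}$ giving the three-term split $\|x_n-x\|\le\|S_{K_0}(x_n-x)\|+\|R_{K_0}x_n\|+\|R_{K_0}x\|$. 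Your direct argument is the concrete $l_p$ instantiation of that same skeleton --- necessity via the coordinate estimate $|x_k^{(n)}-x_k|\le\|x^{(n)}-x\|_p$ plus Minkowski on the tails, with the finitely many initial $x^{(1)},\dots,x^{(N)}$ absorbed by enlarging $K$; sufficiency via the head/tail split with the tail controlled uniformly in $n$ --- but it buys something the abstract route hides: in $l_p$ the projections $R_K$ are trivially contractive, so you need no Inverse Mapping Theorem at all, and the argument is fully elementary. Your Fatou-type step (passing to the limit in finite truncations $\sum_{k=K+1}^{M}$ and then letting $M\to\infty$ to conclude $\sum_{k=K+1}^\infty|x_k|^p\le\varepsilon$, hence $x\in l_p$) is a genuine addition with no counterpart in the paper: Theorem \ref{GCC} \emph{assumes} the candidate limit $x$ is a vector of $X$, whereas your version shows that conditions (1) and (2) by themselves force the termwise limit into $l_p$, a slightly stronger statement. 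The only cosmetic remark: condition (2) gives strict inequality $<\varepsilon$ while your limit passage yields $\le\varepsilon$, which is harmless since $\varepsilon>0$ is arbitrary.
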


See, e.g., \cite[Proposition $2.16$]{Markin2018EFA}, \cite[Proposition $2.17$]{Markin2020EOT}.

\begin{rems}\
\begin{itemize}
\item Condition (1) is termwise convergence.
%%%%%
\item Condition (2) signifies the uniform convergence of the series
\[
\sum_{k=1}^\infty \left|x_k^{(n)}\right|^p
\]
to their respective sums over $n\in \N$.
\end{itemize}
\end{rems}

\begin{prop}[Characterization of Convergence in $c_0$]\label{CCc_0}\ \\
In the (real or complex) space $c_0$, 
\[
\left(x_k^{(n)}\right)_{k\in \N}=:x^{(n)}\to x:=\left(x_k\right)_{k\in \N},\ n\to \infty,
\]
iff
\begin{enumerate}[label={(\arabic*)}]
\item $\forall\, k\in\N:\ x_k^{(n)}\to x_k$, $n\to \infty$, and
%%%%%%%%%%%%%
\item $\displaystyle \forall\, \varepsilon>0\ \exists\, K\in \N\ \forall\,n\in \N:\ \sup_{k\ge K+1} \left|x_k^{(n)}\right|<\varepsilon$.
\end{enumerate}
\end{prop}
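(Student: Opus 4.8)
The plan is to prove both implications directly from the definition of convergence in the $\infty$-norm \eqref{infn}, mirroring the structure of the (known) proof of Proposition~\ref{CCl_p} with the tail series $\sum_{k=K+1}^\infty|\cdot|^p$ replaced throughout by the tail supremum $\sup_{k\ge K+1}|\cdot|$.

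For \emph{necessity}, I would assume $x^{(n)}\to x$ in $c_0$, i.e., $\|x^{(n)}-x\|_\infty\to 0$. Condition (1) is immediate, since for each fixed $k\in\N$ one has $|x_k^{(n)}-x_k|\le \|x^{(n)}-x\|_\infty\to 0$. For condition (2), fix $\varepsilon>0$. First I would use convergence to pick $N\in\N$ with $\|x^{(n)}-x\|_\infty<\varepsilon/3$ for all $n\ge N$, and use $x\in c_0$ to pick $K_0$ with $|x_k|<\varepsilon/3$ for $k\ge K_0+1$; the triangle inequality then gives $\sup_{k\ge K_0+1}|x_k^{(n)}|\le 2\varepsilon/3<\varepsilon$ for every $n\ge N$. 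The point requiring care is that condition (2) quantifies over \emph{all} $n\in\N$, so the finitely many indices $n=1,\dots,N-1$ must be absorbed separately: for each such $n$, membership $x^{(n)}\in c_0$ yields a $K_n$ with $\sup_{k\ge K_n+1}|x_k^{(n)}|<\varepsilon$, and taking $K:=\max\{K_0,K_1,\dots,K_{N-1}\}$ (which only shrinks each tail) delivers a single cutoff $K$ that works uniformly in $n$.

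For \emph{sufficiency}, I would assume (1) and (2) and fix $\varepsilon>0$. Using (2), I choose $K$ so that $\sup_{k\ge K+1}|x_k^{(n)}|<\varepsilon/3$ for all $n$. The key observation is that termwise convergence lets me transfer this tail bound to the limit: for each fixed $k\ge K+1$, passing to the limit in $|x_k^{(n)}|<\varepsilon/3$ gives $|x_k|\le\varepsilon/3$, whence $\sup_{k\ge K+1}|x_k|\le\varepsilon/3$ as well. It then remains to control the finite head $k\in\{1,\dots,K\}$, where condition (1) supplies an $N$ with $|x_k^{(n)}-x_k|<\varepsilon/3$ for all $n\ge N$ and all such $k$. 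Splitting the supremum defining $\|x^{(n)}-x\|_\infty$ into this head and the tail and estimating each piece (the tail via $|x_k^{(n)}-x_k|\le|x_k^{(n)}|+|x_k|$) then yields $\|x^{(n)}-x\|_\infty\le 2\varepsilon/3<\varepsilon$ for $n\ge N$, i.e., $x^{(n)}\to x$.

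The main obstacle is organizational rather than technical: in both directions the tail estimate is effortless for large $n$ but must be made uniform over all $n$, which is precisely where the finitely many exceptional indices are handled by taking the maximum of finitely many cutoffs. The transfer of the tail bound to the limit $x$ in the sufficiency direction is the one genuinely limit-theoretic step, and it is what plays the role of the uniform-convergence-of-series reasoning used in the $l_p$ setting.
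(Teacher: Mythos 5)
Your proof is correct, but it follows a genuinely different route from the paper, which never proves Proposition~\ref{CCc_0} directly: the paper cites it from textbooks and then subsumes it (via Proposition~\ref{CCC}, and again as a restriction of Corollary~\ref{CCc}) under the abstract Theorem~\ref{GCC}, whose proof runs through the Schauder-basis machinery --- continuity of the coordinate functionals, the decomposition $I=S_K+R_K$ of \eqref{idr}, and the uniform bound $\|R_K\|\le 2\|A^{-1}\|$ of \eqref{SnRnN} obtained from the Inverse Mapping Theorem. Your elementary $\varepsilon/3$ argument is in effect the concrete $c_0$-instantiation of that abstract proof: your head/tail split of the supremum is exactly the $S_K/R_K$ decomposition, the trivial estimate $\sup_{k\ge K+1}|x_k^{(n)}-x_k|\le\|x^{(n)}-x\|_\infty$ plays the role of $\|R_K\|\le 2\|A^{-1}\|$ (with the sharper constant $1$), and your absorption of the finitely many indices $n=1,\dots,N-1$ by enlarging the cutoff is the same move the paper makes at the end of its ``only if'' part. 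One step of yours is genuinely different: in the sufficiency direction you transfer the uniform tail bound to the limit by letting $n\to\infty$ termwise in $|x_k^{(n)}|<\varepsilon/3$ (so you never need $x\in c_0$ as a separate input for the tail estimate), whereas the paper instead enlarges $K_0$ using the convergence of the Schauder expansion of $x$, i.e., \eqref{rem2}; your limit-passage trick exploits the coordinatewise structure of the $\infty$-norm and would not carry over verbatim to an arbitrary Schauder basis. You also implicitly, and correctly, use the monotonicity of $K\mapsto\sup_{k\ge K+1}|x_k^{(n)}|$ to reconcile the single-$K$ form of condition (2) with the $\forall\,K\ge K_0$ form (2C) --- the same observation the paper makes explicitly for $c$. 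What each approach buys: yours is self-contained and requires no functional-analytic apparatus; the paper's buys uniformity, delivering $l_p$, $c_0$, $c$, and separable Hilbert spaces from one theorem at the cost of invoking the Inverse Mapping Theorem.
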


See, e.g., \cite[Proposition $2.15$]{Markin2018EFA}, \cite[Proposition $2.16$]{Markin2020EOT}.

\begin{rems}\
\begin{itemize}
\item Condition (1) is termwise convergence.
%%%%%
\item Condition (2) signifies the uniform convergence of the sequences $\left(x_k^{(n)}\right)_{k\in \N}$ to $0$ over $n\in \N$.
\end{itemize}
\end{rems}

One cannot but notice that both characterizations share the same condition  (1) and that condition (2) in each can be reformulated in the following equivalent form: 
\begin{enumerate}
\item[(2C)] $\displaystyle \forall\, \varepsilon>0\ \exists\, K_0\in \N\ \forall\,K\ge K_0\ \forall\,n\in \N:\ \left\|R_Kx^{(n)}\right\|<\eps$,
\end{enumerate}
where $\|\cdot\|$ stands for $p$-norm $\|\cdot\|_p$ ($1\le p<\infty$) or $\infty$-norm, respectively, and the mapping $R_K:X\to X$, $K\in \N$, ($X:=l_p$ ($1\le p<\infty$) or $X:=c_0$) is defined as follows:
\begin{equation}\label{remop}
x:=\left(x_k\right)_{k\in \N}\mapsto R_Kx:=(\underbrace{0,\dots,0}_{\text{$K$ terms}},x_{K+1},x_{K+2},\dots),\ K\in \N.
\end{equation}

Thus, we have the following combined characterization encompassing both $l_p$ ($1\le p<\infty$) and $c_0$.

\begin{prop}[Combined Characterization of Convergence]\label{CCC}\ \\
In the (real or complex) space $X:=l_p$ ($1\le p<\infty$) or $X:=c_0$, 
\[
\left(x_k^{(n)}\right)_{k\in \N}=:x^{(n)}\to x:=\left(x_k\right)_{k\in \N},\ n\to \infty,
\]
iff
\begin{enumerate}[label={(\arabic*)}]
\item $\forall\, k\in\N:\ x_k^{(n)}\to x_k$, $n\to \infty$, and
%%%%%%%%%%%%%
\item[(2C)] $\displaystyle \forall\, \varepsilon>0\ \exists\, K_0\in \N\ \forall\,K\ge K_0\ \forall\,n\in \N:\ \left\|R_Kx^{(n)}\right\|<\eps$,
\end{enumerate}
where $\|\cdot\|$ stands for $p$-norm $\|\cdot\|_p$ ($1\le p<\infty$) or $\infty$-norm, respectively, and the mapping $R_K:X\to X$, $K\in \N$, is defined by \eqref{remop}.
\end{prop}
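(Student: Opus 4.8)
The plan is to deduce the combined statement directly from Propositions~\ref{CCl_p} and~\ref{CCc_0}, which already furnish complete characterizations for $X=l_p$ $(1\le p<\infty)$ and $X=c_0$ individually. Since both characterizations share the termwise-convergence condition~(1), the entire content of the proposition reduces to verifying, for each space, that condition~(2) of Proposition~\ref{CCl_p} (resp.\ Proposition~\ref{CCc_0}) is equivalent to~(2C). First I would record that, in terms of the tail maps $R_K$ of~\eqref{remop}, condition~(2) reads $\|R_Kx^{(n)}\|_p^p<\eps$ in the $l_p$ case and $\|R_Kx^{(n)}\|_\infty<\eps$ in the $c_0$ case; since $\eps>0$ is arbitrary, the $p$-th power is immaterial (replace $\eps$ by $\eps^p$), so in both cases~(2) is equivalent to the statement
\[
\forall\,\eps>0\ \exists\,K\in\N\ \forall\,n\in\N:\ \left\|R_Kx^{(n)}\right\|<\eps,
\]
henceforth~(2$'$), where $\|\cdot\|$ denotes the ambient norm.

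The key observation, which I would isolate as the engine of the argument, is the \emph{monotonicity} of the tail norms: for each fixed $n\in\N$ the sequence $K\mapsto\|R_Kx^{(n)}\|$ is nonincreasing. Indeed, in $l_p$ one has $\|R_Kx^{(n)}\|_p^p=\sum_{k=K+1}^\infty|x_k^{(n)}|^p$, a tail of a convergent series of nonnegative terms, which decreases as $K$ grows; in $c_0$ one has $\|R_Kx^{(n)}\|_\infty=\sup_{k\ge K+1}|x_k^{(n)}|$, a supremum over a shrinking index set, which likewise decreases. In passing, the estimate $\|R_Kx\|\le\|x\|$ confirms that each $R_K$ maps $X$ into $X$, so the statement is well posed.

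With monotonicity in hand, the equivalence~(2$'$)$\Leftrightarrow$(2C) is immediate. The implication (2C)$\Rightarrow$(2$'$) is trivial: given $\eps>0$, any $K\ge K_0$ furnished by~(2C) serves as the witness required by~(2$'$). For the converse (2$'$)$\Rightarrow$(2C), given $\eps>0$ let $K_0$ be the index $K$ supplied by~(2$'$); then for every $K\ge K_0$ and every $n\in\N$ monotonicity yields $\|R_Kx^{(n)}\|\le\|R_{K_0}x^{(n)}\|<\eps$, which is exactly~(2C). Substituting~(2C) for~(2) in Proposition~\ref{CCl_p} (resp.\ Proposition~\ref{CCc_0}) then delivers both directions of the asserted \emph{iff}.

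I do not anticipate a genuine obstacle here: the argument is a quantifier manipulation powered by monotonicity. The only points deserving a modicum of care are the harmless passage through the $p$-th power in the $l_p$ case and the direction of the inequality in the monotonicity step—confirming that enlarging $K$ can only shrink the tail norm—since this is precisely what licenses the upgrade from the single threshold~$K$ in~(2) to the uniform-in-$K$ statement~(2C).
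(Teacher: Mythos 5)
Your proposal is correct and follows essentially the same route as the paper: the paper introduces Proposition~\ref{CCC} precisely by asserting that condition (2) of Propositions~\ref{CCl_p} and~\ref{CCc_0} can be reformulated as (2C), and your monotonicity-of-tail-norms argument simply supplies the detail the paper leaves implicit (the same monotonicity observation the paper later invokes explicitly for the space $c$). The only difference worth noting is that the paper additionally re-derives Proposition~\ref{CCC} as an instant corollary of the general Theorem~\ref{GCC}, whereas you work directly from the two classical propositions, which is equally legitimate.
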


In view of the fact that both $l_p$ ($1\le p<\infty$) and $c_0$ are Banach spaces with a Schauder basis, our goal to show that a two-condition characterization of convergence, similar to the foregoing combined characterization, holds for all such spaces appears to be amply motivated. We establish a general characterization of convergence in a Banach space with a Schauder basis and obtain as instant corollaries characterizations of convergence in an an infinite-dimensional separable Hilbert space and the Banach space $c$ of \textit{convergent sequences}.

%%%%%%%%%%%%%%%%%%%%%%%%%%%%%%%%%%%%%%%%%%%%%%
\section[Preliminaries]{Preliminaries}

Here, we briefly outline certain preliminaries essential for our discourse.

\begin{defn}[Schauder Basis]\ \\
A \textit{Schauder basis} (also a \textit{countable basis}) of a (real or complex) Banach space $(X,\|\cdot\|)$ is a countably infinite set $\left\{e_n\right\}_{n\in \N}$ in $X$ such that
\begin{equation*}
\forall\, x\in X\ \exists! \left(c_k(x)\right)_{k\in\N}\in \F^\N:\ x=\sum_{k=1}^\infty c_k(x)e_k
\end{equation*}
($\F:=\R$ or $\F:=\C$), the series called the \textit{Schauder expansion} of $x$ 
and the numbers $c_k(x)\in \F$, $k\in\N$, the \textit{coordinates} of $x$ relative to $\left\{e_n\right\}_{n\in \N}$. 
\end{defn}

See, e.g., \cite{Markin2018EFA,Markin2020EOT,Lyust-Sob,SingerI}.

For an infinite-dimensional separable Hilbert space $(X,(\cdot,\cdot),\|\cdot\|)$
($(\cdot,\cdot)$ stands for \textit{inner product} and $\|\cdot\|$ for \textit{inner product norm}), an \textit{orthonormal basis} $\left\{e_n\right\}_{n\in \N}$ is a Schauder basis and for an arbitrary $x\in X$,
\begin{equation}\label{shs}
x=\sum_{k=1}^\infty c_k(x)e_k\quad \text{with}\quad c_k(x)=(x,e_k),\ k\in \N,
\end{equation}
(see, e.g., \cite{Markin2018EFA,Markin2020EOT}).

As we mention above, the sequence spaces $l_p$ ($1\le p<\infty$), $c_0$, and $c$ are examples of Banach spaces with a Schauder basis. For $l_p$ ($1\le p<\infty$) and $c_0$, the \textit{standard} such a basis is the set 
\[
\left\{e_n:=\left(\delta_{nk} \right)_{k\in \N}\right\}_{n\in \N}
\]
($\delta_{nk}$ is the \textit{Kronecker delta}) and, for an arbitrary $x:=(x_k)_{k\in\N}$ in the foregoing spaces,
\begin{equation*}
x=\sum_{k=1}^\infty c_k(x)e_k\quad \text{with}\quad c_k(x)=x_k,\ k\in \N,
\end{equation*}
(see, e.g., \cite{Markin2018EFA,Markin2020EOT,Lyust-Sob,SingerI}).

For the Banach space $c$ of \textit{convergent sequences} equipped with $\infty$-norm (see \eqref{infn}), the \textit{standard} Schauder basis is $\left\{e_n\right\}_{n\in \Z_+}$ ($\Z_+:=\left\{0,1,2,\dots\right\}$ is the set of \textit{nonnegative integers}) with
\[
e_0:=(1,1,1,\dots)
\]
and, for an arbitrary $x:=(x_k)_{k\in\N}\in c$,
\begin{equation}\label{c}
x=\sum_{k=0}^\infty c_k(x)e_k\quad \text{with}\quad c_0(x)=\lim_{m\to\infty}x_m,\ c_k(x)=x_k-c_0(x),\ k\in \N,
\end{equation}
(see, e.g., \cite{Markin2018EFA,Markin2020EOT,Lyust-Sob,SingerI}).

Banach spaces with more sophisticated Schauder bases encompass
$L_p(a,b)$ ($1\le p<\infty$) and $C[a,b]$ ($-\infty<a<b<\infty$) with $\infty$-norm
\[
C[a,b]\ni x\mapsto \|x\|_\infty:=\max_{a\le t\le b}|x(t)|
\]
(see, e.g., \cite{Lyust-Sob,SingerI}).

A Banach space with a Schauder basis is \textit{infinite-dimensional} and \textit{separable} (see, e.g., \cite{Lyust-Sob,Markin2018EFA,Markin2020EOT}). However, an infinite-dimensional separable Banach space need not have a Schauder basis (see \cite{Enflo1973}).

The set of $\F$-termed sequences
\begin{equation*}
Y:=\left\{y:=\left(c_k\right)_{k\in\N}\in \F^\N\,\middle|\, \sum_{k=1}^\infty c_ke_k\ \text{converges in}\ X\right\}
\end{equation*}
with termwise linear operations and the norm
\begin{equation*}
Y\ni y:=\left(c_k\right)_{k\in\N}\mapsto \|y\|_Y:=
\sup_{n\in\N}\left\|\sum_{k=1}^n c_ke_k\right\|
\end{equation*}
is a Banach space and the linear operator
\begin{equation*}
Y\ni y:=\left(c_k\right)_{k\in\N}\mapsto Ay:=
\sum_{k=1}^\infty c_ke_k\in X
\end{equation*}
is subject to the \textit{Inverse Mapping Theorem} (see, e.g., \cite{Dun-SchI,Lyust-Sob,Markin2018EFA,Markin2020EOT}). The \textit{boundedness}  of the inverse operator $A^{-1}:X\to Y$ implies \textit{boundedness}, and hence, \textit{continuity}, for the linear Schauder coordinate functionals 
\[
X\ni x=\sum_{k=1}^\infty c_k(x)e_k\mapsto c_n(x)\in \F,\ n\in \N,
\]
with
\[
\|c_n\|\le \dfrac{2\|A^{-1}\|}{\|e_n\|},\ n\in \N,
\]
(see, e.g., \cite{Lyust-Sob,Markin2018EFA,Markin2020EOT}) as well as for the linear operators
\begin{equation}\label{SnRn}
X\ni x=\sum_{k=1}^\infty c_k(x)e_k\mapsto S_nx:=\sum_{k=1}^n c_k(x)e_k,\
R_nx:=\sum_{k=n+1}^\infty c_k(x)e_k,\ n\in \N,
\end{equation}
with
\begin{equation}\label{idr}
I=S_n+R_n,\ n\in \N,
\end{equation}
($I$ is the \textit{identity operator} on $X$) and
\begin{equation}\label{SnRnN}
\|S_n\|\le \|A^{-1}\|\ \text{and}\ \|R_n\|\le 2\|A^{-1}\|,\ n\in \N,
\end{equation}
(see, e.g., \cite{Lyust-Sob}).

\begin{rem}
Here and henceforth, we use the notation $\|\cdot\|$ for the operator norm.
\end{rem}

%%%%%%%%%%%%%%%%%%%%%%%%%%%%%%%%%%%%%%%%%%%%%%
\section{General Characterization}

The following statement appears to be a perfect illustration of the profound observation by Jacob T. Schwartz found in \cite{Schwartz1954} and chosen as the epigraph.

\begin{thm}[General Characterization of Convergence]\label{GCC}\ \\
Let  $(X,\|\cdot\|)$ be a (real or complex) Banach space  with a Schauder basis $\left\{e_n\right\}_{n\in \N}$ and corresponding coordinate functionals $c_n(\cdot)$, $n\in \N$. 

For a sequence $\left(x_n\right)_{n\in \N}$ and a vector $x$ in $X$,
\[
x_n\to x,\ n\to \infty,
\]
iff
\begin{enumerate}[label={(\arabic*)}]
\item\label{cond1} $\forall\, k\in\N:\ c_k\left(x_n\right)\to c_k(x)$, $n\to \infty$, and
%%%%%%%%%%%%%
\item\label{cond2} $\displaystyle \forall\, \varepsilon>0\ \exists\, K_0\in \N\ \forall\,K\ge K_0\ \forall\,n\in \N:\ \left\|R_Kx_n\right\|<\eps$.
\end{enumerate}
\end{thm}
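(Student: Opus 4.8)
The plan is to prove both implications straight from the $\varepsilon$-$N$ definition of convergence, relying on three facts from the Preliminaries: the continuity (boundedness) of the coordinate functionals $c_k(\cdot)$, the decomposition identity $I=S_K+R_K$ from \eqref{idr}, and—crucially—the \emph{uniform} operator bound $\|R_K\|\le 2\|A^{-1}\|$ from \eqref{SnRnN}. I will also repeatedly use that the Schauder expansion of any fixed vector converges, so that $\|R_Kx\|\to 0$ as $K\to\infty$. Note $\|A^{-1}\|>0$ (indeed $\|A^{-1}\|\ge 1$) since $A^{-1}$ is a nonzero operator on a nontrivial space, which legitimizes the divisions below.

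For the necessity direction I would assume $x_n\to x$. Condition~\ref{cond1} is immediate, since each $c_k(\cdot)$ is continuous, whence $c_k(x_n)\to c_k(x)$. The content lies in~\ref{cond2}. Fix $\varepsilon>0$. First choose $K^*\in\N$ with $\|R_Kx\|<\varepsilon/2$ for all $K\ge K^*$, using convergence of the expansion of $x$. Next, using $x_n\to x$, choose $N$ so that $\|x_n-x\|<\varepsilon/(4\|A^{-1}\|)$ for $n>N$; then for such $n$ and every $K\ge K^*$ the splitting $R_Kx_n=R_K(x_n-x)+R_Kx$ together with $\|R_K\|\le 2\|A^{-1}\|$ gives $\|R_Kx_n\|<\varepsilon/2+\varepsilon/2=\varepsilon$. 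The remaining finitely many indices $n=1,\dots,N$ are treated one at a time: for each such $n$, $\|R_Kx_n\|\to 0$ as $K\to\infty$, so there is $K_n$ beyond which $\|R_Kx_n\|<\varepsilon$. Taking $K_0:=\max\{K^*,K_1,\dots,K_N\}$ yields the estimate of~\ref{cond2} uniformly in $n$.

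For the sufficiency direction I would assume~\ref{cond1} and~\ref{cond2} and fix $\varepsilon>0$. The key move is to select one index $K$ that is simultaneously $\ge K_0$ (so that $\|R_Kx_n\|<\varepsilon/3$ for all $n$, by~\ref{cond2}) and large enough that $\|R_Kx\|<\varepsilon/3$ (by convergence of the expansion of $x$). With this $K$ now frozen, \eqref{idr} gives $x_n-x=(S_Kx_n-S_Kx)+(R_Kx_n-R_Kx)$, hence $\|x_n-x\|\le \|S_Kx_n-S_Kx\|+\|R_Kx_n\|+\|R_Kx\|$. The last two terms are each $<\varepsilon/3$. The first term equals the finite sum $\sum_{k=1}^{K}\bigl(c_k(x_n)-c_k(x)\bigr)e_k$, so by~\ref{cond1} it tends to $0$ as $n\to\infty$; choosing $N$ with $\|S_Kx_n-S_Kx\|<\varepsilon/3$ for $n>N$ closes the estimate and gives $x_n\to x$.

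The main obstacle is the uniformity over $n$ demanded by~\ref{cond2}: the tail estimate must hold for \emph{every} $n$ at once, not merely asymptotically. The resolution is the standard ``finite head plus uniform tail'' split in the necessity proof—the bulk of the sequence (large $n$) is controlled uniformly through $\|R_K\|\le 2\|A^{-1}\|$ and closeness to $x$, while the finitely many early terms are absorbed by enlarging $K_0$. It is precisely this uniform operator bound, a consequence of the Inverse Mapping Theorem recorded in the Preliminaries, that makes the abstract Schauder-basis argument go through exactly as in the concrete $l_p$ and $c_0$ cases.
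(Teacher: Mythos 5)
Your proposal is correct and follows essentially the same route as the paper's own proof: the same continuity argument for condition (1), the same split $R_Kx_n=R_K(x_n-x)+R_Kx$ with the uniform bound $\|R_K\|\le 2\|A^{-1}\|$ plus enlargement of $K_0$ to absorb the finitely many initial indices in the necessity direction, and the same $\varepsilon/3$-decomposition via $I=S_K+R_K$ in the sufficiency direction. Your version merely makes explicit a couple of points the paper leaves implicit (the $K_0:=\max\{K^*,K_1,\dots,K_N\}$ bookkeeping and the positivity of $\|A^{-1}\|$), which is fine.
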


\begin{proof}\

\textit{``Only if''} part. Suppose that, for a sequence $\left(x_n\right)_{n\in \N}$ and a vector $x$ in $X$,
\[
x_n\to x,\ n\to \infty.
\]

Then, by the continuity of the Schauder coordinate functionals $c_n(\cdot)$, $n\in \N$,  we infer that condition \ref{cond1} holds. 

Let $\eps > 0$ be arbitrary. Then, 
\begin{equation}\label{conv}
\exists\, N\in \N\ \forall\, n \geq N:\ \left\|x_n- x\right\|< \frac{\eps}{4\|A^{-1}\|}. 
\end{equation}

Since $x \in X$, 
\[
R_Kx:=\sum_{k=K+1}^\infty c_k(x)e_k\to 0,\ K\to \infty,
\]
and hence,
\begin{equation}\label{rem}
\exists\, K_0\in \N\ \forall\,K\ge K_0:\ \|R_Kx\|<\frac{\eps}{2}.
\end{equation}
		
In view \eqref{SnRnN}, \eqref{conv}, and \eqref{rem}, we have:
\begin{align*}
\forall\,K\ge K_0,\ \forall\, n\ge N:\ \left\|R_Kx_n\right\| & =\left\|R_Kx_n-R_Kx+R_Kx\right\|\\
&\leq \left\|R_K\left(x_n-x\right)\right\|+\left\|R_Kx\right\| \\
&\le \|R_K\|\left\|x_n-x\right\|+\|R_Kx\|\\
& < 2\|A^{-1}\|\frac{\eps}{4\|A^{-1}\|}+\frac{\eps}{2}=\frac{\eps}{2}+\frac{\eps}{2}= \eps.
\end{align*}

Further, since $x_n\in X$, $n=1,...,N-1$, we can regard $K_0\in \N$ in \eqref{rem} to be large enough so that
\[
\forall\,K\ge K_0,\ \forall\, n=1,...,N-1:\ \left\|R_Kx_n\right\|=\left\|\sum_{k=K+1}^\infty c_k\left(x_n\right)e_k\right\|<\eps.
\]

Thus, condition \ref{cond2} holds as well.

This completes the proof of the \textit{``only if''} part.

\smallskip
\textit{``If''} part. Suppose that, for a sequence $\left(x_n\right)_{n\in \N}$ and a vector $x$ in $X$, conditions \ref{cond1} and \ref{cond2} are met. 

For an arbitrary $\eps > 0$ and $K_0\in \N$ from condition \ref{cond2}, by condition \ref{cond1},
\begin{equation}\label{part}
\exists\, N \in \N\ \forall\, n \geq N:\
\left\|S_{K_0}\left(x_n-x\right)\right\| \leq \sum_{k=1}^{K_0} \left|c_k\left(x_n\right)-c_k\left(x\right)\right|\|e_k\|<\frac{\epsilon}{3}.
\end{equation}
		
Since $x \in X$, we can also regard that $K_0\in \N$ in condition \ref{cond2} to be large enough so that
\begin{equation}\label{rem2}
\|R_{K_0}x\|<\frac{\eps}{3}.
\end{equation}

Then, in veiw of \eqref{idr}, \eqref{part}, and \eqref{rem2} and by condition \ref{cond2}, 
\begin{align*}
\forall\, n \geq N:\ \|x_n-x\|&=\left\|S_{K_0}\left(x_n-x\right)+R_{K_0}\left(x_n-x\right)\right\|
\\
&\leq \left\|S_{K_0}\left(x_n-x\right)\right\|+\left\|R_{K_0}x_n\right\|+\|R_{K_0}x\|\\
&< \frac{\eps}{3}+\frac{\eps}{3}+\frac{\eps}{3} =\eps.
\end{align*}

This concludes the proof of the \textit{``if''} part and the entire statement.
\end{proof}

\begin{rems}\
\begin{itemize}
\item Condition \ref{cond1} is the convergence of the coordinates of $x_n$ 
to the corresponding coordinates of $x$ relative to $\left\{e_k\right\}_{k\in \N}$.
%%%%%
\item Condition \ref{cond2} signifies the uniform convergence of the Schauder expansions
\[
\sum_{k=1}^\infty c_k(x_n)e_k
\]
of $x_n$ relative to $\left\{e_k\right\}_{k\in \N}$ over $n\in \N$.
\end{itemize}
\end{rems}

Now, the \textit{Combined Characterization of Convergence} (Proposition \ref{CCC}) is an instant corollary of the foregoing general characterization.

%%%%%%%%%%%%%%%%%%%%%%%%%%%%%%%%%%%%%%%%%%%%%%
\section{Characterization of Convergence in an Infinite-Dimensional Separable Hilbert Space}

For an infinite-dimensional separable Hilbert space $(X,(\cdot,\cdot),\|\cdot\|)$
relative to an \textit{orthonormal basis} $\left\{e_n\right\}_{n\in \N}$, in view of \eqref{shs}, the \textit{General Characterization of Convergence} (Theorem \ref{GCC}) acquires the following form.

\begin{cor}[Characterization of Convergence in a Separable Hilbert Space]\label{CCSHS}\ \\
Let  $(X,(\cdot,\cdot),\|\cdot\|)$ be a (real or complex) infinite-dimensional separable Hilbert space with an orthonormal basis $\left\{e_n\right\}_{n\in \N}$. 

For a sequence $\left(x_n\right)_{n\in \N}$ and a vector $x$ in $X$,
\[
x_n\to x,\ n\to \infty,
\]
iff
\begin{enumerate}[label={(\arabic*)}]
\item $\forall\, k\in\N:\ (x_n,e_k)\to (x,e_k)$, $n\to \infty$, and
%%%%%%%%%%%%%
\item $\displaystyle \forall\, \varepsilon>0\ \exists\, K\in \N\ \forall\,n\in \N:\ \sum_{k=K+1}^\infty {\left|(x_n,e_k)\right|}^2<\eps$.
%{\left\|\sum_{k=K+1}^\infty (x_n,e_k)e_k\right\|}^2
\end{enumerate}
\end{cor}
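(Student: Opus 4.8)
The plan is to deduce the corollary directly from the \emph{General Characterization of Convergence} (Theorem \ref{GCC}), exploiting two special features of the Hilbert space setting. First, by \eqref{shs} the Schauder coordinate functionals relative to the orthonormal basis $\left\{e_n\right\}_{n\in \N}$ are given by $c_k(x)=(x,e_k)$, $k\in\N$. Consequently, condition (1) of the corollary is nothing but condition \ref{cond1} of Theorem \ref{GCC} rewritten in inner-product notation, and requires no separate argument.

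The second, and only substantive, ingredient is the translation of the operator-norm estimate on $R_K$ into the tail-sum form. Since $R_Kx=\sum_{k=K+1}^\infty (x,e_k)e_k$ and the system $\left\{e_k\right\}_{k\in \N}$ is orthonormal, the Pythagorean theorem (equivalently, Parseval's identity applied to the tail of the Schauder expansion) yields
\[
\|R_Kx\|^2=\sum_{k=K+1}^\infty {\left|(x,e_k)\right|}^2,\qquad K\in\N,
\]
for every $x\in X$. Thus $\left\|R_Kx_n\right\|<\sqrt{\eps}$ holds if and only if $\sum_{k=K+1}^\infty {\left|(x_n,e_k)\right|}^2<\eps$, which is the exact bridge between the two formulations.

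With these identities in hand I would verify that condition \ref{cond2} of Theorem \ref{GCC} is equivalent to condition (2) of the corollary. The apparent discrepancy---the corollary asserts the existence of a \emph{single} $K$ while the theorem quantifies over \emph{all} $K\ge K_0$---dissolves upon noting that, for each fixed $n$, the tail $\sum_{k=K+1}^\infty {\left|(x_n,e_k)\right|}^2$ is nonincreasing in $K$. Hence, from condition (2) one recovers condition \ref{cond2} by setting $K_0:=K$ (having applied condition (2) with $\eps^2$ in place of $\eps$) and observing that the bound for index $K$ propagates to all larger indices; conversely, from condition \ref{cond2} one recovers condition (2) simply by taking $K:=K_0$ (having applied condition \ref{cond2} with $\sqrt{\eps}$). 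In both directions the passage between $\eps$ and $\eps^2$ is harmless, since $\eps>0$ is arbitrary.

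The main obstacle, if any, is purely bookkeeping: one must keep the monotonicity of the tails and the $\eps\leftrightarrow\eps^2$ substitution straight so as to match the two quantifier patterns. No genuine analytic difficulty arises, the orthonormality having done all the real work by reducing the norm of the remainder to the corresponding tail sum.
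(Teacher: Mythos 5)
Your proposal is correct and follows essentially the same route as the paper, which presents the corollary as an instant consequence of Theorem \ref{GCC} via \eqref{shs}: the identification $c_k(x)=(x,e_k)$, the Pythagorean identity $\|R_Kx\|^2=\sum_{k=K+1}^\infty |(x,e_k)|^2$, and the monotonicity of the tail sums in $K$ to reconcile the quantifier patterns. You merely make explicit (the $\eps\leftrightarrow\eps^2$ bookkeeping and the collapse of ``$\exists K_0\ \forall K\ge K_0$'' to ``$\exists K$'') what the paper leaves implicit, and your details check out.
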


\begin{rems}\
\begin{itemize}
\item Condition (1) is the convergence of the Fourier coefficients of $x_n$ to the corresponding Fourier coefficients of $x$ relative to $\left\{e_k\right\}_{k\in \N}$.
%%%%%
\item Condition (2) signifies the uniform convergence of the Fourier series expansions
\[
\sum_{k=1}^\infty (x_n,e_k)e_k
\]
of $x_n$ relative to $\left\{e_k\right\}_{k\in \N}$ over $n\in \N$.
%%%%%%%
\item The \textit{Characterization of Convergence in $l_p$} (Proposition \ref{CCl_p})
for $p=2$ is now a particular case of the prior characterization.
\end{itemize}
\end{rems}

%%%%%%%%%%%%%%%%%%%%%%%%%%%%%%%%%%%%%%%%%%%%%%
\section{Characterization of Convergence in $c$}

Another immediate corollary of the \textit{General Characterization of Convergence} (Theorem \ref{GCC}) is the realization of the latter in the space $c$ of \textit{convergent sequences} equipped with $\infty$-norm (see \eqref{infn}) relative to the standard Schauder basis $\left\{e_n\right\}_{n\in \Z_+}$ (see Preliminaries).

Indeed, in $c$ relative to $\left\{e_n\right\}_{n\in \Z_+}$, for an arbitrary $x:=(x_k)_{k\in\N}$,
\[
x=\sum_{k=0}^\infty c_k(x)e_k\quad \text{with}\quad c_0(x)=\lim_{m\to\infty}x_m,\ c_k(x)=x_k-c_0(x),\ k\in \N,
\]
(see \eqref{c}) and
\begin{align*}
S_Kx&:=\sum_{k=0}^{K}c_k(x)e_k
=(\lim_{m\to \infty }x_m,x_1-\lim_{m\to \infty }x_m,\dots,
x_{K}-\lim_{m\to \infty }x_m,0,\dots),\\
R_Kx&:=\sum_{k=K+1}^\infty c_k(x)e_k
=(\hspace{-2mm}\underbrace{0,\dots,0}_{\text{$K+1$ terms}}\hspace{-2mm},x_{K+1}-\lim_{m\to \infty }x_m,\dots),\ K\in \Z_+,
\end{align*}
(cf. \eqref{SnRn}).

Thus, the \textit{General Characterization of Convergence} (Theorem \ref{GCC}),
in view of the obvious circumstance that, for any $x:=(x_k)_{k\in\N}\in c$, the sequence
\[
\|R_Kx\|=\sup_{k\ge K+1} \left|x_k^{(n)}-\lim_{m\to \infty }x_m^{(n)}\right|,\ K\in \Z_+,
\]
is decreasing, acquires the following form.

\begin{cor}[Characterization of Convergence in $c$]\label{CCc}\ \\
In the (real or complex) space $c$, 
\[
\left(x_k^{(n)}\right)_{k\in \N}=:x^{(n)}\to x:=\left(x_k\right)_{k\in \N},\ n\to \infty,
\]
iff
\begin{enumerate}[label={(\arabic*)}]
\item $\displaystyle \lim_{m\to \infty}x_m^{(n)}\to \lim_{m\to \infty}x_m$, $n\to \infty$,
\quad  and \quad 
$\displaystyle \forall\, k\in\N:\ x_k^{(n)}\to x_k$, $n\to \infty$;
%%%%%%%%%%%%%
\item $\displaystyle \forall\, \varepsilon>0\ \exists\, K\in \Z_+\ \forall\,n\in \N:\ \sup_{k\ge K+1} \left|x_k^{(n)}-\lim_{m\to \infty }x_m^{(n)}\right|<\varepsilon$.
\end{enumerate}
\end{cor}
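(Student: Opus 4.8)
The plan is to derive this corollary as a direct specialization of the General Characterization of Convergence (Theorem~\ref{GCC}) to the space $c$ equipped with its standard Schauder basis $\left\{e_n\right\}_{n\in\Z_+}$. Accordingly, I would verify that conditions~\ref{cond1} and~\ref{cond2} of Theorem~\ref{GCC}, rewritten in terms of the explicit coordinate functionals and remainder operators recorded just above the statement, reduce precisely to conditions~(1) and~(2) here.

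For the reduction of condition~\ref{cond1}, I would invoke the explicit coordinates from~\eqref{c}: $c_0\left(x^{(n)}\right)=\lim_{m\to\infty}x_m^{(n)}$ and $c_k\left(x^{(n)}\right)=x_k^{(n)}-\lim_{m\to\infty}x_m^{(n)}$ for $k\in\N$. The $k=0$ instance of the coordinate convergence $c_k\left(x^{(n)}\right)\to c_k(x)$ is exactly the convergence $\lim_{m\to\infty}x_m^{(n)}\to\lim_{m\to\infty}x_m$. Granting this, for each fixed $k\in\N$ the identity $x_k^{(n)}=c_k\left(x^{(n)}\right)+c_0\left(x^{(n)}\right)$ shows that $c_k\left(x^{(n)}\right)\to c_k(x)$ holds iff $x_k^{(n)}\to x_k$. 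This two-way bookkeeping is routine and yields the equivalence of the two forms of condition~(1).

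For condition~\ref{cond2}, the computation of $R_Kx^{(n)}$ recorded above gives $\left\|R_Kx^{(n)}\right\|=\sup_{k\ge K+1}\left|x_k^{(n)}-\lim_{m\to\infty}x_m^{(n)}\right|$, so Theorem~\ref{GCC}'s condition~\ref{cond2} becomes the assertion that for every $\eps>0$ there is $K_0\in\Z_+$ with $\sup_{k\ge K+1}\left|x_k^{(n)}-\lim_{m\to\infty}x_m^{(n)}\right|<\eps$ for all $K\ge K_0$ and all $n\in\N$. The step I expect to require the most care is matching this with the single existential quantifier ``$\exists\,K$'' in condition~(2). One direction is immediate, as one may simply take $K:=K_0$. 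The reverse direction is where the monotonicity flagged above enters: since, for each $n$, the nonnegative sequence $\left(\left\|R_Kx^{(n)}\right\|\right)_{K\in\Z_+}$ is non-increasing in $K$ (enlarging $K$ shrinks the index set of the supremum), a bound valid for a single $K$ propagates to all larger indices, so the two quantifier blocks are interchangeable. Combining the two reductions completes the argument.
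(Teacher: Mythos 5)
Your proposal is correct and follows essentially the same route as the paper: the paper likewise obtains the corollary by specializing Theorem~\ref{GCC} to $c$ with its standard Schauder basis, using the explicit coordinates from~\eqref{c} and the computed form of $R_K$, and invoking the monotonicity of $K\mapsto\left\|R_Kx\right\|$ to pass between the quantifier blocks ``$\exists\,K_0\ \forall\,K\ge K_0$'' and ``$\exists\,K$.'' Your write-up merely makes explicit the coordinate bookkeeping for condition~(1) and the quantifier matching for condition~(2) that the paper treats as immediate.
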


\begin{rems}\
\begin{itemize}
\item Condition (1), beyond termwise convergence, includes convergence of the limits.
%%%%%
\item Condition (2) signifies the uniform convergence of the sequnces $\left(x_k^{(n)}\right)_{k\in \N}$ to their respective limits over $n\in \N$.
%%%%%%%
\item The \textit{Characterization of Convergence in $c_0$} (Proposition \ref{CCc_0})
is a mere restriction of the prior characterization to the subspace $c_0$ of $c$.
\end{itemize}
\end{rems}

%%%%%%%%%%%%%%%%%%%%%%%%%%%%%%%%%%%%%%%%%%%%%%
\section{Concluding Remark}

As is easily seen, the \textit{General Characterization of Convergence} (Theorem \ref{GCC}) is consistent with the following \textit{characterization of compactness}, which underlies the results of \cite{Markin1990}.

\begin{thm}[Characterization of Compactness {\cite[Theorem III.$7.4$]{Lyust-Sob}}]\label{CP}\ \\
In a (real or complex) Banach space $(X,\|\cdot\|)$ with a Schauder basis, a set $C$ is precompact (a closed set $C$ is compact) iff
\begin{enumerate}[label={(\arabic*)}]
\item $C$ is bounded and
%%%%%%%%%%%%%
\item $\displaystyle \forall\, \varepsilon>0\ \exists\, K_0\in \N\ \forall\,K\ge K_0\ \forall\,x\in C:\ \left\|R_Kx\right\|<\eps$.
\end{enumerate}
\end{thm}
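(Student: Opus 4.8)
The plan is to prove both implications by exploiting the splitting $I = S_K + R_K$ from \eqref{idr}, which separates each $x \in C$ into a finite-dimensional ``head'' $S_K x \in \spa\{e_1,\dots,e_K\}$ and a ``tail'' $R_K x$. The idea is to reduce precompactness to finite-dimensional Heine--Borel together with the uniform tail control furnished by condition (2). Throughout I would use that, in the complete space $X$, precompactness is equivalent to total boundedness, so the objective in each direction is to produce or exploit finite $\eps$-nets.

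For the \emph{necessity} direction, suppose $C$ is precompact. Boundedness of $C$ is immediate, giving (1). For (2), fix $\eps > 0$ and take a finite $\delta$-net $\{x_1,\dots,x_m\} \subseteq C$ with $\delta := \eps/(4\|A^{-1}\|)$. Since each $x_j \in X$, its Schauder tail satisfies $R_K x_j \to 0$ as $K \to \infty$, so I can pick a common $K_0$ with $\|R_K x_j\| < \eps/2$ for all $j$ and all $K \ge K_0$. Given an arbitrary $x \in C$ and $K \ge K_0$, I would choose $x_j$ with $\|x - x_j\| < \delta$ and estimate $\|R_K x\| \le \|R_K\|\,\|x - x_j\| + \|R_K x_j\| < 2\|A^{-1}\|\delta + \eps/2 = \eps$, using the operator bound $\|R_K\| \le 2\|A^{-1}\|$ from \eqref{SnRnN}; this is exactly condition (2).

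For the \emph{sufficiency} direction, which I expect to carry the substance of the argument, suppose (1) and (2) hold and fix $\eps > 0$. By (2) I would choose $K_0$ so that $\|R_{K_0} x\| < \eps/2$ for every $x \in C$. Now $S_{K_0}(C)$ is a bounded subset (since $C$ is bounded and $S_{K_0}$ is a bounded operator by \eqref{SnRnN}) of the finite-dimensional subspace $\spa\{e_1,\dots,e_{K_0}\}$, hence precompact by the Heine--Borel theorem; thus $S_{K_0}(C)$ admits a finite $\eps/2$-net $\{y_1,\dots,y_m\}$. For an arbitrary $x \in C$ I would select $y_j$ with $\|S_{K_0} x - y_j\| < \eps/2$ and, writing $x = S_{K_0} x + R_{K_0} x$ via \eqref{idr}, obtain $\|x - y_j\| \le \|R_{K_0} x\| + \|S_{K_0} x - y_j\| < \eps$. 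Hence $\{y_1,\dots,y_m\}$ is a finite $\eps$-net for $C$, so $C$ is totally bounded and therefore precompact; when $C$ is additionally closed, completeness of $X$ upgrades this to compactness.

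The main obstacle, and really the only genuinely nontrivial ingredient, is the finite-dimensional reduction in the sufficiency part: condition (2) is precisely what allows every element of $C$ to be approximated uniformly by its head $S_{K_0} x$, after which precompactness is inherited from Heine--Borel applied to the bounded set $S_{K_0}(C)$ inside the $K_0$-dimensional space $\spa\{e_1,\dots,e_{K_0}\}$. Everything else reduces to routine $\eps/2$-splitting and the standard equivalence, valid in the complete space $X$, between precompactness and total boundedness.
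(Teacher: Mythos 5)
Your proof is correct, and it is essentially the canonical argument: the paper itself states this theorem without proof, deferring to \cite[Theorem III.$7.4$]{Lyust-Sob}, and your two-sided argument --- necessity via a finite net in $C$, the tail estimates $R_Kx_j\to 0$, and the uniform bound $\|R_K\|\le 2\|A^{-1}\|$ from \eqref{SnRnN}; sufficiency via the splitting $I=S_{K_0}+R_{K_0}$ of \eqref{idr}, Heine--Borel on the bounded head $S_{K_0}(C)$ in the finite-dimensional span, and the equivalence of precompactness with total boundedness in the complete space $X$ --- is exactly the classical proof given there. It also mirrors the $\varepsilon$-splitting technique the paper uses in its own proof of Theorem \ref{GCC}, so no further commentary is needed.
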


%%%%%%%%%%%%%%%%Bibliography%%%%%%%%%%%%%%%%%%%%%%%
 
%%%%%%%%%%%%%%%%%%%%%%%%%%%%%%%%%%%%%%%%%%%%%%
\end{document}